\numberwithin{equation}{section}
\newtheorem{thm}{Theorem}[section]
\newtheorem{lem}[thm]{Lemma}
\newtheorem{defin}[thm]{Definition}
\newtheorem{rem}[thm]{Remark}
\begin{document}
\title{Initial-boundary value problem}

\author{Ravshan Ashurov}
\author{Oqila Muhiddinova}
\address{National University of Uzbekistan named after Mirzo Ulugbek and Institute of Mathematics, Uzbekistan Academy of Science}
\curraddr{Institute of Mathematics, Uzbekistan Academy of Science,
Tashkent, 81 Mirzo Ulugbek str. 100170} \email{ashurovr@gmail.com}

\small

\title[Initial-boundary value problem]
{Initial-boundary value problem for a time-fractional subdiffusion
equation on the torus}

\begin{abstract}

An initial-boundary value problem for a time-fractional
subdiffusion equation with the Riemann-Liouville derivatives on
$N$-dimensional torus is considered. Uniqueness and existence of
the classical solution of the posed problem are proved by the
classical Fourier method. Sufficient conditions for the initial
function and for the right-hand side of the equation are
indicated, under which the corresponding Fourier series converge
absolutely and uniformly. It should be noted, that the condition
on the initial function found in this paper is less restrictive
than the analogous condition in the case of an equation with
derivatives in the sense of Caputo.

\vskip 0.3cm \noindent {\it AMS 2000 Mathematics Subject
Classifications} :
Primary 35R11; Secondary 74S25.\\
{\it Key words}: Time-fractional subdiffusion equation, the
Riemann-Liouville derivatives, initial-boundary value problem,
Fourier method, Liouville spaces.
\end{abstract}+-

\maketitle

\section{Main result}

The fractional integration of order $ \sigma <0 $ of the function
$ h (t) $ defined on $ [0, \infty) $ has the form
$$
\partial_t^\sigma h(t)=\frac{1}{\Gamma
(-\rho)}\int\limits_0^t\frac{h(\xi)}{(t-\xi)^{\sigma+1}} d\xi,
\quad t>0,
$$
provided the right-hand side exists. Here $\Gamma(\sigma)$ is
Euler's gamma function. Using this definition one can define the
Riemann - Liouville fractional derivative of order $\rho$,
$0<\rho< 1$, as (see, for example, \cite{PSK}, p. 14)
$$
\partial_t^\rho h(t)= \frac{d}{dt}\partial_t^{\rho-1} h(t).
$$
If in this definition we interchange differentiation and
fractional integration, then we get the definition of the
regularized derivative, that is, the definition of the fractional
derivative in the sense of Caputo:
$$
D_t^\rho h(t)= \partial_t^{\rho-1}\frac{d}{dt} h(t).
$$

Note that if $\rho=1$, then fractional derivatives coincides with
the ordinary classical derivative of the first order: $\partial_t
h(t) = D_t h(t)= \frac{d}{dt} h(t)$.

Let $\mathbb{T}^N$ be $N$ -dimensional torus: $\mathbb{T}^N=(-\pi,
\pi]^N$, $N\geq 1$. We define by $C(\mathbb{T}^N)$ and
$C^2(\mathbb{T}^N)$ a class of $2\pi$-periodic on each variable
$x_j$ functions $v(x)$ from $C(\mathbb{T}^N)$ and
$C^2(\mathbb{T}^N)$ correspondingly. Let $A$ stand for a positive
operator, defined on $C^2(\mathbb{T}^N)$ and acting as $A
v(x)=-\Delta v(x)$, where $\Delta$ is the Laplace operator.

Let $\rho\in(0,1) $ be a constant number. Consider the
initial-boundary value problem
\begin{equation}\label{eq}
\partial_t^\rho u(x,t) + Au(x,t) = f(x,t), \quad x\in \mathbb{T}^N, \quad
0<t\leq T,
\end{equation}
\begin{equation}\label{in}
\lim\limits_{t\rightarrow 0}\partial_t^{\rho-1} u(x,t) =
\varphi(x), \quad x\in \mathbb{T}^N,
\end{equation}
where $f$ and $\varphi$ are given continuous functions.

\begin{defin} A function $u(x,t)$ with the properties
$\partial_t^\rho u(x,t), A(x,D)u(x,t)\in C(\mathbb{T}^N\times
(0,T])$, $\partial_t^{\rho-1} u(x,t)\in C(\mathbb{T}^N\times
[0,T])$  and satisfying the conditions of problem (\ref{eq}) -
(\ref{in}) is called \textbf{the solution} of the initial-boundary
value problem.
\end{defin}

Before formulating the main result, let us introduce some
concepts.

It is not hard to see that the closure $\hat{A}$ of operator $A$
in $L_2(\mathbb{T}^N)$ is selfadjoint and it has a complete (in
$L_2(\mathbb{T}^N)$) set of eigenfunctions $\{\gamma e^{inx}\}$,
$\gamma = \gamma(N)=(2\pi)^{-N/2}$, $n\in \mathbb{Z}^N$ and
corresponding eigenvalues $|n|^2=n_1^2+n_2^2+...+n_N^2$.
Therefore, by virtue of J. von Niemann theorem, for any $\tau> 0$
one can introduce the power of operator $\hat{A}$ as $\hat{A}^\tau
g(x)=\sum\limits_{n\in \mathbb{Z}^N} |n|^\tau g_n e^{inx}$, where
$g_n$ are Fourier coefficients:
$$
g_n=(2\pi)^{-N}\int\limits_{\mathbb{T}^N} g(x) e^{-inx} dx.
$$
The domain of definition of this operator is defined from the
condition $\hat{A}^\tau g(x)\in L_2(\mathbb{T}^N)$ and has the
form
$$
D(\hat{A}^\tau)=\{g\in L_2(\mathbb{T}^N): \sum\limits_{n\in
\mathbb{Z}^N} |n|^{2\tau} |g_n|^2 < \infty\}.
$$
On the other hand, the class of functions $L_2(\mathbb{T}^N)$
which for a given fixed number $a> 0$ make the norm
\begin{equation}\label{T}
||g||^2_{L_2^a(\mathbb{T}^N)}=\big|\big|\sum\limits_{n\in
\mathbb{Z}^N}(1+|n|^2)^{\frac{a}{2}}g_n
e^{inx}\big|\big|^2_{L_2(\mathbb{T}^N)}=\sum\limits_{n\in
\mathbb{Z}^N}(1+|n|^2)^a|g_n|^2
\end{equation}
finite is termed the Liouville class $L_2^a(\mathbb{T}^N)$.
Therefore one has $ D(\hat{A}^\tau)= L_2^{\tau m}(\mathbb{T}^N)$.

Let $E_{\rho, \mu}$ be  the two-parametric Mittag-Leffler
function:
$$
E_{\rho, \mu}(t)= \sum\limits_{k=0}^\infty \frac{t^k}{\Gamma(\rho
k+\mu)}.
$$

Here is the main result.

\begin{thm}\label{main} Let $a > \frac{N}{2}$ and $\varphi\in C(\mathbb{T}^N)\cap L^{a-2}_2(\mathbb{T}^N)$. Moreover, let $f(x,t)\in
L^a_2(\mathbb{T}^N)$ for $0<t\leq T$
 and $||t^{1-\rho}\,f(\cdot, t)||^2_{L_2^a(\mathbb{T}^N)}\in C[0, T]$. Then there exists a solution of initial-boundary
value problem (\ref{eq}) - (\ref{in}) and it has the form
\begin{equation}\label{solution}
u(x,t)=\sum\limits_{n\in\mathbb{Z}^N} \bigg[\varphi_n t^{\rho-1}
E_{\rho, \rho} (-|n|^2 t^\rho)+\int\limits_0^t
f_n(t-\xi)\xi^{\rho-1} E_{\rho, \rho}(-|n|^2\xi^\rho)
d\xi\bigg]e^{inx},
\end{equation}
which absolutely and uniformly converges on $x\in \mathbb{T}^N$
and for each $t\in (0, T]$, where $\varphi_n$ and $f_n(t)$ are
corresponding Fourier coefficients. Moreover, the series obtained
after applying term-wise the operators $\partial_t^\rho$ and $A$
also converge absolutely and uniformly on $x\in \mathbb{T}^N$ and
for each $t\in (0, T]$.

\end{thm}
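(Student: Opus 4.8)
The plan is to apply the classical Fourier method. First I would expand the sought solution, the source term and the initial data in the eigenfunctions of $\hat A$, writing $u(x,t)=\sum_{n\in\mathbb{Z}^N}T_n(t)e^{inx}$, $f(x,t)=\sum_n f_n(t)e^{inx}$ and $\varphi(x)=\sum_n\varphi_n e^{inx}$. Substituting into (\ref{eq})--(\ref{in}) and using $Ae^{inx}=|n|^2e^{inx}$ reduces the problem, for each fixed $n$, to the scalar Riemann--Liouville fractional initial value problem $\partial_t^\rho T_n(t)+|n|^2T_n(t)=f_n(t)$ with $\lim_{t\to0}\partial_t^{\rho-1}T_n(t)=\varphi_n$. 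Its solution is classical: the homogeneous part carrying the initial datum is $\varphi_n t^{\rho-1}E_{\rho,\rho}(-|n|^2t^\rho)$, and Duhamel's principle supplies the particular part $\int_0^t f_n(t-\xi)\xi^{\rho-1}E_{\rho,\rho}(-|n|^2\xi^\rho)\,d\xi$. Summing these gives precisely the candidate (\ref{solution}); the whole difficulty is then to justify that this series is a genuine solution, i.e. that it and the series obtained by term-wise application of $A$ and $\partial_t^\rho$ converge absolutely and uniformly, and that the initial condition (\ref{in}) is attained.

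The estimates rest on the behaviour of the Mittag--Leffler function on the negative axis. I would record two bounds: the standard one $|E_{\rho,\rho}(-z)|\le C(1+z)^{-1}$ for $z\ge0$, and, crucially, the sharper decay $|E_{\rho,\rho}(-z)|\le C(1+z^2)^{-1}$. The latter follows from the asymptotic expansion of $E_{\rho,\mu}$ at $+\infty$: for $\mu=\rho$ the would-be leading term carries the factor $1/\Gamma(\rho-\rho)=1/\Gamma(0)=0$, so the $z^{-1}$ contribution vanishes and the true decay is $O(z^{-2})$. This faster decay is exactly what allows the hypothesis $\varphi\in L_2^{a-2}(\mathbb{T}^N)$ rather than $\varphi\in L_2^{a}(\mathbb{T}^N)$, and it is the reason the condition here is milder than in the Caputo case, where $E_{\rho,1}(-z)$ decays only like $z^{-1}$. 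I expect establishing and correctly exploiting this $z^{-2}$ estimate to be the main obstacle.

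Granting the estimates, convergence is obtained by Cauchy--Schwarz against the weight $(1+|n|^2)^{(a-2)/2}$ (resp. $(1+|n|^2)^{a/2}$ for the source). For the homogeneous part of (\ref{solution}) itself the standard bound already gives $\sum_n|\varphi_n|t^{\rho-1}|E_{\rho,\rho}(-|n|^2t^\rho)|\le C\,t^{\rho-1}\|\varphi\|_{L_2^{a-2}}\big(\sum_n(1+|n|^2)^{-(a-2)}(1+|n|^2t^\rho)^{-2}\big)^{1/2}$, and the multiplier series converges for each fixed $t>0$ precisely because $a>N/2$. For the part obtained after applying $A$ (and, by the equation, $\partial_t^\rho$) the same computation produces an extra factor $|n|^4$ inside the multiplier series; here the standard bound would force $a>N/2+2$, and it is the sharp bound $|E_{\rho,\rho}(-z)|^2\le C(1+z^2)^{-2}$ that reduces the requirement back to $a>N/2$. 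For the source term I would substitute $s=t-\xi$, interchange sum and integral by Tonelli, estimate the inner sum as above, and use the hypothesis $\|t^{1-\rho}f(\cdot,t)\|_{L_2^a}\in C[0,T]$ to write $\|f(\cdot,s)\|_{L_2^a}=s^{\rho-1}g(s)$ with $g$ bounded; the remaining integral is controlled by a Beta-function integral of the form $\int_0^t(t-s)^{\rho-1+\epsilon}s^{\rho-1}\,ds$ whose convergence at $s=t$ is again guaranteed exactly by $a>N/2$.

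Finally I would verify the initial condition and the regularity demanded by the definition of solution. Applying $\partial_t^{\rho-1}$ term-wise to the homogeneous part uses the identity $\partial_t^{\rho-1}\big[t^{\rho-1}E_{\rho,\rho}(-\lambda t^\rho)\big]=E_{\rho,1}(-\lambda t^\rho)$, so that term tends to $\sum_n\varphi_n e^{inx}=\varphi(x)$ as $t\to0$, while the corresponding expression for the source part is shown to vanish as $t\to0$; the uniform convergence established above justifies the term-wise operations and the passage to the limit, and yields continuity of $\partial_t^{\rho-1}u$ on $\mathbb{T}^N\times[0,T]$ and of $\partial_t^\rho u$, $Au$ on $\mathbb{T}^N\times(0,T]$. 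Assembling these facts shows that (\ref{solution}) is the required solution.
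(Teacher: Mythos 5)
Your proposal is correct and follows essentially the same route as the paper: the same homogeneous-plus-Duhamel decomposition, the same crucial observation that the $z^{-1}$ term in the asymptotics of $E_{\rho,\rho}(-z)$ vanishes because $1/\Gamma(0)=0$ (which is exactly what permits $\varphi\in L_2^{a-2}$ and the gain over the Caputo case), and the same weighted $\ell_2$ convergence argument — your direct Cauchy--Schwarz against $(1+|n|^2)^{a/2}$ is just the unpacked form of the paper's Lemma \ref{CL} on $D^\alpha(\hat A+1)^{-\sigma}:L_2\to C$. The only quibble is that the convergence of your Beta-function integral for the source term is guaranteed by $\rho>0$ and the choice of $\varepsilon$, not by $a>N/2$ (which enters only through the Sobolev embedding), but this does not affect the argument.
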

\begin{rem}
If $N \leq 3$, then under the conditions of the theorem it
suffices to require $\varphi\in C(\mathbb{T}^N)$. Note, when
$a>\frac{N}{2}$, according to the Sobolev embedding theorem, all
functions in $L^{a}_2(\mathbb{T}^N)$ are $2\pi$-periodic
continuous functions. The fulfillment of the inverse inequality $a
\leq \frac{N}{2}$, admits the existence of unbounded functions in
$L^{a}_2(\mathbb{T}^N)$ (see, for example, \cite{AAP}). Therefore,
condition  $a>\frac{N}{2}$ for function $f$ of this theorem is not
only sufficient for the statement to be hold, but it is also
necessary.
\end{rem}

Initial-boundary value problem (\ref{eq}) - (\ref{in}) for various
elliptic operators $A$ have been considered by a number of authors
using different methods (see, for example, handbook \cite{Koch}).
It has been mainly considered the case of the Caputo derivatives
$D_t^\rho$ instead of $\partial_t^\rho$. In the book of A.A.
Kilbas et al. \cite{Kil} (Chapter 6) there is a survey of works
published before 2006. The case of one spatial variable $x\in
\mathbb{R}$ and subdiffusion equation with "the elliptical part"
$u_{xx}$ were considered  for example in the monograph of A. V.
Pskhu \cite{PSK} (Chapter 4, see references thesein). The paper
Gorenflo, Luchko and Yamamoto \cite{GorLuchYam} is devoted to the
study of subdiffusion equations in Sobelev spaces. In the paper by
Kubica and Yamamoto \cite{KubYam}, initial-boundary value problems
for equations with time-dependent coefficients are considered. In
the multidimensional case ($x\in \mathbb{R}^N$), instead of the
differential expression $u_{xx}$, authors considered either the
second order elliptic operator (\cite{Agr} - \cite{PS1}) or
elliptic pseudodifferential operators with constant coefficients
in the whole space $\mathbb{R}^N$ (Umarov \cite{SU}). In the paper
of Yu. Luchko \cite{Luch} the author constructed solutions by the
eigenfunction expansion in the case of $f = 0$ and discussed the
unique existence of the generalized solution to problem (\ref{eq})
- (\ref{in}) with the Caputo derivative.  In his recent paper
\cite{PS1} A. V. Pskhu considered an initial-boundary value
problem for subdiffusion equation with the Laplace operator and
domain $\Omega$ - a multidimensional rectangular region. The
author succeeded to construct the Green's function. In an
arbitrary $N$-dimensional domain $\Omega$ initial-boundary value
problems for subdiffusion equations (the fractional part of the
equation is a multi-term and initial conditions are non-local)
with the Caputo derivatives has been investigated by M. Ruzhansky
et al. \cite{Ruz}. The authors proved the existence and uniqueness
of the generalized solution to the problem.

A result similar to Theorem \ref{main} was obtained in the recent
paper \cite{AO} for a more general subdiffusion equation. But the
conditions on the functions $f(x, t)$ and $\varphi(x)$ that
guarantee the existence and uniqueness of the solution to problem
(\ref{eq})-(\ref{in}) found in \cite{AO} are more stringent. This
is due to the fact that in the present paper we give a more
precise estimate for the Mittag-Leffler function $E_{\rho,
\rho}(-t)$, $t>0$ (see also \cite{AA}).

It is interesting to note that the condition on the function
$\varphi(x)$ found in Theorem \ref{main} is less restrictive than
the analogous condition in the case of an equation (\ref{eq}) with
derivatives in the sense of Caputo (see, for example, \cite{Luch},
\cite{Yama11}).

\section{Proof of Theorem \ref{main}}

The uniqueness of the solution can be proved by the standard
technique based on completeness in $L_2(\mathbb{T}^N)$ of the set
of eigenfunctions $\{\gamma e^{inx}\}$ (see, for example,
\cite{AO}).

Proof of existence based on the following lemma (see M.A.
Krasnoselski et al. \cite{Kra}, p. 453), which is a simple
corollary of the Sobolev embedding theorem.

\begin{lem}\label{CL} Let $\sigma > 1+\frac{N}{4}$. Then for any $|\alpha|\leq 2$
operator $D^\alpha (\hat{A}+1)^{-\sigma}$ (completely)
continuously maps from $L_2(\mathbb{T}^N)$ into $C(\mathbb{T}^N)$
and moreover the following estimate holds true
\begin{equation}\label{CL1}
||D^\alpha (\hat{A}+1)^{-\sigma} g||_{C(\mathbb{T}^N)} \leq C
||g||_{L_2(\mathbb{T}^N)}.
\end{equation}

\end{lem}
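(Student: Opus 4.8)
The plan is to reduce the statement to the explicit Fourier-multiplier action of $D^\alpha(\hat A+1)^{-\sigma}$ and then apply the Cauchy--Schwarz inequality; the essential point is that the threshold $\sigma>1+\frac N4$ is precisely what forces the relevant numerical series to converge. Conceptually this is the Sobolev embedding mechanism: $(\hat A+1)^{-\sigma}$ smooths by $2\sigma$ derivatives, $D^\alpha$ costs at most two, and $L_2^{s}(\mathbb{T}^N)\hookrightarrow C(\mathbb{T}^N)$ holds for $s>\frac N2$, so $2\sigma-2>\frac N2$ (equivalently $\sigma>1+\frac N4$) is exactly the borderline. I would make this quantitative directly on the spectral side.

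Since $\hat A e^{inx}=|n|^2 e^{inx}$, the operator acts as
$$
D^\alpha(\hat A+1)^{-\sigma}g(x)=\sum_{n\in\mathbb{Z}^N}(in)^\alpha(1+|n|^2)^{-\sigma}g_n\,e^{inx}.
$$
As $|e^{inx}|=1$, the $C(\mathbb{T}^N)$-norm of the right-hand side is at most $\sum_n|n^\alpha|(1+|n|^2)^{-\sigma}|g_n|$, so it suffices to bound this scalar sum by $\|g\|_{L_2(\mathbb{T}^N)}$. First I would apply Cauchy--Schwarz,
$$
\sum_n|n^\alpha|(1+|n|^2)^{-\sigma}|g_n|\le\Big(\sum_n|n^\alpha|^2(1+|n|^2)^{-2\sigma}\Big)^{1/2}\Big(\sum_n|g_n|^2\Big)^{1/2},
$$
and recall Parseval's identity $\sum_n|g_n|^2=(2\pi)^{-N}\|g\|^2_{L_2(\mathbb{T}^N)}$, which controls the second factor. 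For the first factor I would use $|\alpha|\le2$ to estimate $|n^\alpha|^2\le|n|^{2|\alpha|}\le(1+|n|^2)^2$, so that each summand is at most $(1+|n|^2)^{-(2\sigma-2)}$. The series $\sum_{n\in\mathbb{Z}^N}(1+|n|^2)^{-s}$ converges if and only if $s>\frac N2$; with $s=2\sigma-2$ this is exactly the hypothesis $\sigma>1+\frac N4$. This yields a finite constant $C$ and proves estimate (\ref{CL1}); moreover the dominating series converges, so the Fourier series for $D^\alpha(\hat A+1)^{-\sigma}g$ converges uniformly and its sum lies in $C(\mathbb{T}^N)$.

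It then remains to upgrade boundedness to complete continuity. For this I would introduce the finite-rank operators $T_R$ obtained by truncating the multiplier to $|n|\le R$ and repeat the preceding estimate on the tail, giving
$$
\|D^\alpha(\hat A+1)^{-\sigma}-T_R\|_{L_2(\mathbb{T}^N)\to C(\mathbb{T}^N)}\le(2\pi)^{-N/2}\Big(\sum_{|n|>R}(1+|n|^2)^{-(2\sigma-2)}\Big)^{1/2}.
$$
Since the full series converges, this tends to $0$ as $R\to\infty$, so $D^\alpha(\hat A+1)^{-\sigma}$ is a norm-limit of finite-rank operators and hence compact.

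I do not expect a genuine obstacle here; the only point requiring care is the bookkeeping of exponents and of the normalization constant $\gamma=(2\pi)^{-N/2}$. In particular one must check that the crude bound $|n^\alpha|^2\le(1+|n|^2)^2$ does not waste the threshold: it does not, because the extremal case $|\alpha|=2$ is exactly the one that saturates the inequality and fixes the borderline exponent $2\sigma-2$, so the condition $\sigma>1+\frac N4$ is both sufficient and, for $|\alpha|=2$, sharp.
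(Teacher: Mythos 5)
Your proof is correct, and it follows the same exponent bookkeeping as the paper's: the decisive computation in both cases is that $|n^\alpha|^2(1+|n|^2)^{-2\sigma}$ leaves a series with exponent $-(2\sigma-|\alpha|)$, which must beat the threshold $N/2$, giving $\sigma>1+\frac{N}{4}$ in the worst case $|\alpha|=2$. The difference is one of packaging. The paper cites the Sobolev embedding $\|h\|_{C(\mathbb{T}^N)}\le C\|h\|_{L_2^a(\mathbb{T}^N)}$ for $a>N/2$ as a black box and reduces the lemma to the bounded-multiplier estimate $|n|^{2|\alpha|}(1+|n|^2)^{a-2\sigma}\le C$ for $N/2<a\le 2\sigma-|\alpha|$; you instead inline the proof of that embedding by bounding the sup norm with the $\ell^1$ norm of the Fourier coefficients and applying Cauchy--Schwarz, which makes the argument fully self-contained at the cost of redoing a standard fact. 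You also gain something the paper omits: an actual proof of complete continuity, via norm-approximation by the finite-rank truncations $T_R$ with the tail estimate $\bigl(\sum_{|n|>R}(1+|n|^2)^{-(2\sigma-2)}\bigr)^{1/2}\to 0$; the paper simply asserts compactness (the lemma is attributed to Krasnoselskii et al.). One small caveat: your closing remark that the condition is ``sharp'' for $|\alpha|=2$ only shows that your particular Cauchy--Schwarz bound degenerates at the borderline, not that the operator bound itself fails there; this is harmless since sharpness is not claimed in the lemma, but you should not present it as established.
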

\begin{proof}Since the embedding theorem $||D^\alpha (\hat{A}+1)^{-\sigma} g||_{C(\mathbb{T}^N)} \leq C||D^\alpha (\hat{A}+1)^{-\sigma}
g||_{L_2^a(\mathbb{T}^N)}$ for $a>N/2$, then it is sufficient to
prove the inequality $||D^\alpha (\hat{A}+1)^{-\sigma}
g||_{L_2^a(\mathbb{T}^N)} \leq C||g||_{L_2(\mathbb{T}^N)}.$ But
this is a consequence of the estimate
$$
\sum\limits_{n\in\mathbb{Z}^N}|g_n|^2|n|^{2|\alpha|}(1+A(n))^{-2\sigma}(1+|n|^2)^a\leq
C\sum\limits_{n\in\mathbb{Z}^N}|g_n|^2,
$$
that is valid for $\frac{N}{2}< a \leq 2\sigma  - |\alpha|$.

\end{proof}

Since the initial condition (\ref{in}) can be rewritten as (see,
for example, \cite{PSK} p. 104)
\begin{equation}\label{in1}
\lim\limits_{t\rightarrow
0}t^{1-\rho}u(x,t)=\frac{\varphi(x)}{\Gamma(\rho)},
\end{equation}
then one can easily verify that the function (\ref{solution})
formally satisfies the conditions of problem (\ref{eq})-(\ref{in})
(see, for example, \cite{Gor}, p. 173). In order to prove that
function (\ref{solution}) is actually a solution to the problem,
it remains to substantiate this formal statement, i.e. show that
the operators $A$ and $\partial_t^\rho$ can be applied term by
term to the series (\ref{solution}). To do this we remind the
following asymptotic estimate of the Mittag-Leffler function with
a sufficiently large negative argument (see, for example,
\cite{Dzh66}, p. 134)
\[
E_{\rho, \rho}(-t)=-\frac{t^{-2}}{\Gamma(-\rho)} +O(t^{-3}).
\]
Therefore, since $E_{\rho, \rho}(t)$ is real analytic, one has the
estimate
\begin{equation}\label{m1}
|E_{\rho, \rho}(-t)|\leq \frac{C}{1+ t^2}, \quad t>0.
\end{equation}
We will also use a coarser estimate with a positive $\lambda$ and
$0<\varepsilon<1$:
\begin{equation}\label{m2}
|t^{\rho-1} E_{\rho,\rho}(-\lambda t^\rho)|\leq
\frac{Ct^{\rho-1}}{1+(\lambda t^\rho)^2}\leq C
\lambda^{\varepsilon-1} t^{\varepsilon\rho-1}, \quad t>0,
\end{equation}
which is easy to verify. Indeed, let $t^\rho\lambda<1$, then $t<
\lambda^{-1/\rho}$ and
$$
t^{\rho -1} = t^{\rho-\varepsilon\rho} t^{\varepsilon\rho-1} <
\lambda^{\varepsilon-1}t^{\varepsilon\rho-1}.
$$
If $t^\rho\lambda\geq 1$, then $\lambda^{-1}\leq t^\rho$ and
$$
\lambda^{-2} t^{-\rho-1}=\lambda^{-1+\varepsilon}
\lambda^{-1-\varepsilon} t^{-\rho-1}\leq
\lambda^{\varepsilon-1}t^{\varepsilon\rho-1}.
$$

Note the series (\ref{solution}) is in fact the sum of two series.
Consider the following partial sums of the first series:
\begin{equation}\label{S1}
S^1_k(x,t)=\sum\limits_{|n|^2<k} t^{\rho-1} E_{\rho,\rho}(-|n|^2
t^\rho)\varphi_n \, e^{inx},
\end{equation}
and suppose that function $\varphi$ satisfies the condition of
Theorem \ref{main}, i.e. for some $\tau> \frac{N}{4}$
$$
\sum\limits_{n\in\mathbb{Z}^N} |n|^{4(\tau-1)} |\varphi_n|^2 \leq
C_\varphi<\infty.
$$
Since $\hat{A}^{-\tau-1} e^{inx} = |n|^{-2(\tau+1)} e^{inx}$, we
may rewrite the sum (\ref{S1}) as
$$
S^1_k(x,t)=\hat{A}^{-\tau-1}\sum\limits_{|n|^2<k} t^{\rho-1}
E_{\rho,\rho}(-|n|^2 t^\rho)\varphi_n \, |n|^{2(\tau+1)}\,
e^{inx}.
$$
Therefore by virtue of Lemma \ref{CL} one has
$$
||D^\alpha S^1_k||_{C(\mathbb{T}^N)}=||D^\alpha
\hat{A}^{-\tau-1}\sum\limits_{|n|^2<k} t^{\rho-1}
E_{\rho,\rho}(-|n|^2 t^\rho)\varphi_n \, |n|^{2(\tau+1)}\,
e^{inx}||_{C(\mathbb{T}^N)}\leq
$$
\begin{equation}\label{S11}
\leq C ||\sum\limits_{|n|^2<k} t^{\rho-1} E_{\rho,\rho}(-|n|^2
t^\rho)\varphi_n \, |n|^{2(\tau+1)}\,
e^{inx}||_{L_2(\mathbb{T}^N)}.
\end{equation}
Using the orthonormality of the system $\{e^{inx}\}$, we will have
\begin{equation}\label{S2}
||D^\alpha S^1_k||^2_{C(\mathbb{T}^N)}\leq C \sum\limits_{|n|^2<k}
\big|t^{\rho-1} E_{\rho,\rho}(-|n|^2 t^\rho)\varphi_n \,
|n|^{2(\tau+1)}\big|^2.
\end{equation}
Application of estimate (\ref{m1}) and inequality $(|n|^2
t^\rho)^2 (1+ (|n|^2 t^\rho)^2)^{-1}<1$ gives
$$
\sum\limits_{|n|^2<k} \big|t^{\rho-1} E_{\rho,\rho}(-|n|^2
t^\rho)\varphi_n \, |n|^{2(\tau+1)}\big|^2\leq C
t^{-2(\rho+1)}\sum\limits_{|n|^2<k}
|n|^{4(\tau-1)}|\varphi_n|^2\leq C t^{-2(\rho+1)}C_\varphi.
$$
Therefore we can rewrite the estimate (\ref{S2}) as
$$
||D^\alpha S^1_k||^2_{C(\mathbb{T}^N)}\leq C t^{-2(\rho+1)}
C_\varphi.
$$

This implies uniformly on $x\in\mathbb{T}^N$ convergence of the
differentiated sum (\ref{S1}) with respect to the variables $x_j$
for each $t\in (0,T]$. On the other hand, the sum (\ref{S11})
converges for any permutation of its members as well, since these
terms are mutually orthogonal. This implies the absolute
convergence of the differentiated sum (\ref{S1}) on the same
interval $t\in (0,T]$.

Now we consider the second part of the series (\ref{solution}):
\begin{equation}\label{S2}
S^2_k(x,t)=\sum\limits_{|n|^2<k} \int\limits_0^t
f_n(t-\xi)\xi^{\rho-1} E_{\rho,\rho}(-|n|^2 \xi^\rho)\,d\xi\,
e^{inx}
\end{equation}
and suppose that function  $f(x,t)$  satisfies all the conditions
of Theorem \ref{main}, i.e. the following series converges
uniformly on $t\in [0, T]$ for some $\tau> \frac{N}{4}$:
$$
\sum\limits_{n\in\mathbb{Z}^N}t^{2(1-\rho)} |n|^{4\tau} |f_n(t)|^2
\leq C_f<\infty.
$$
We choose a small $\varepsilon>0$ in such a way, that
$\tau+1-\varepsilon> 1+\frac{N}{4}$.  Since
$\hat{A}^{-\tau-1+\varepsilon} e^{inx} =
|n|^{-2(\tau+1-\varepsilon)} e^{inx}$, we may rewrite the sum
(\ref{S2}) as
$$
S^2_k(x,t)=\hat{A}^{-\tau-1+\varepsilon}\sum\limits_{|n|^2<k}
\int\limits_0^t f_n(t-\xi)\xi^{\rho-1} E_{\rho,\rho}(-|n|^2
\xi^\rho)\,d\xi\,|n|^{2(\tau+1-\varepsilon)} e^{inx}.
$$
Then by virtue of Lemma \ref{CL} one has
$$
||D^\alpha S^2_k||_{C(\mathbb{T}^N)}=||D^\alpha
\hat{A}^{-\tau-1+\varepsilon}\sum\limits_{|n|^2<k} \int\limits_0^t
f_n(t-\xi)\xi^{\rho-1} E_{\rho,\rho}(-|n|^2
\xi^\rho)\,d\xi\,|n|^{2(\tau+1-\varepsilon)}
e^{inx}||_{C(\mathbb{T}^N)}\leq
$$
\[
\leq C \big|\big|\sum\limits_{|n|^2<k} \int\limits_0^t
f_n(t-\xi)\xi^{\rho-1} E_{\rho,\rho}(-|n|^2
\xi^\rho)\,d\xi\,|n|^{2(\tau+1-\varepsilon)}
e^{inx}\big|\big|_{L_2(\mathbb{T}^N)}.
\]
Using the orthonormality of the system $\{e^{inx}\}$, we will have
$$
||D^\alpha S^2_k||^2_{C(\mathbb{T}^N)}\leq C \sum\limits_{|n|^2<k}
\big|\int\limits_0^t f_n(t-\xi)\xi^{\rho-1} E_{\rho,\rho}(-|n|^2
\xi^\rho)\,d\xi\,|n|^{2(\tau+1-\varepsilon)}\big|^2.
$$
Now we use estimate (\ref{m2}) and apply the generalized Minkowski
inequality. Then
$$
||D^\alpha S^2_k||^2_{C(\mathbb{T}^N)}\leq C\bigg(\int\limits_0^t
\xi^{\varepsilon\rho-1} (t-\xi)^{\rho-1}
\big(\sum\limits_{|n|^2<k}
|n|^{4\tau}(t-\xi)^{2(1-\rho)}|f_n(t-\xi)|^2\big)^{1/2} d\xi
\bigg)^2\leq C\cdot C_f,
$$
where $C$ depends on $T$ and $\varepsilon$. Hence, using the same
argument as above, we see that the differentiated sum (\ref{S2})
with respect to the variables $x_j$ converges absolutely and
uniformly on $(x,t)\in \mathbb{T}^N\times [0,T]$.

Further, from equation (\ref{eq}) one has
$$
\partial_t^\rho \big(S^1_k+S^2_k\big)=  -A \big(S^1_k+S^2_k\big)+\sum\limits_{|n|^2<k} f_n(t)e^{inx}.
$$
Absolutely and uniformly convergence of the latter  series can be
proved as above.

Thus Theorem \ref{main} is completely proved.

\section{Acknowledgement} The authors convey thanks to Sh. A.
Alimov for discussions of these results.

\

\

\bibliographystyle{amsplain}

\end{document}